\newcommand{\lra}{\longrightarrow}
\newcommand\sE{{\mathcal E}}
\newcommand\sH{{\mathcal H}}
\newcommand\sS{{\mathcal S}}
\newcommand\sM{{\mathcal M}}
\newcommand\bR{{\mathbb R}}
\newcommand\bZ{{\mathbb Z}}
\newcommand\bC{{\mathbb C}}
\newcommand\bQ{{\mathbb Q}}
\newcommand\bP{{\mathbb P}}
\newcommand\rh{{\dasharrow}}
\newcounter{lemma}
\newtheorem{lemma1}[lemma]{\setcounter{equation}{0}}
\newenvironment{lemma}{\begin{lemma1}{\bf Lemma.}}{\end{lemma1}}
\newenvironment{theorem}{\begin{lemma1}{\bf Theorem.}}{\end{lemma1}}
\newenvironment{question}{\begin{lemma1}{\bf Question.}}{\end{lemma1}}
\newenvironment{corollary}{\begin{lemma1}{\bf Corollary.}}{\end{lemma1}}
\newenvironment{remark}{\begin{lemma1}{\bf Remark.}\rm}{\end{lemma1}}
\newenvironment{Induction Step}{\begin{lemma1}{\bf Induction Step.}}{\end{lemma1}}
\newenvironment{Proof of Theorem 1.2}{\begin{lemma1}{\bf Proof of Theorem 1.2.}}{\end{lemma1}}
\title[hyperk\"ahler manifolds]
{A remark on dynamical degrees of automorphisms 
of hyperk\"ahler manifolds}
\author[K. Oguiso]{Keiji Oguiso}
\date{\today}
\begin{document}
\subjclass[2000]{14J50 (14J28 14J40 37B40 53C26) }
\dedicatory{To the memory of Professor Masayoshi Nagata}
\begin{abstract} 
We describe all the dynamical degrees of automorphisms of hyperk\"ahler 
manifolds 
in terms of the first dynamical degree. We also present two explicit examples 
of different geometric flavours. 
\end{abstract}

\maketitle

\section{Introduction}
\setcounter{lemma}{0}

A {\it hyperk\"ahler manifold} is a simply connected compact K\"ahler manifold 
$X$ such that $H^0(X, \Omega_X^2) = \bC \sigma_X$, where $\sigma_X$ 
is an everywhere non-degenerate holomorphic $2$-form on $X$. It is then of 
even dimension. 
\par
\vskip 4pt
The aim of this note is to point out the following simple, explicit behaviour 
of the {\it dynamical degrees} of automorphisms of hyperk\"ahler manifolds (see Section 2 for terminologies). 
\begin{theorem}\label{hk}
Let $X$ be a hyperk\"ahler manifold of dimension $2n$ and $g$ be a holomorphic automorphism of $X$. Let $k$ be an integer such that $0 \le k \le 2n$ and $d_k(g)$ be the $k$-th dynamical degree of $g$. Then 
$$d_{2n-k}(g) = d_k(g) = d_1(g)^k$$ 
for $0 \le k \le n$. 
In particular, if $g$ is of positive entropy, then
$$1= d_0(g) < d_1(g) < \cdots < d_{n-1}(g) < d_{n}(g) 
> d_{n+1}(g) > \cdots > d_{2n}(g) = 1\,\, ,$$
i.e., the function 
$$\bZ \cap [0, n] \ni k \mapsto d_k(g) \in \bR$$ 
is strictly increasing. Moreover, the topological entropy $h(g)$ of $g$ is 
$$h(g) = n\log\, d_1(g)\,.$$   
\end{theorem}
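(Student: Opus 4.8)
The plan is to reduce everything to the linear algebra of $g^{*}$ acting on $H^{2}(X,\bR)$, fed into two standard inputs: the cohomological description of dynamical degrees and their log-concavity. Recall that for an automorphism $g$ of a compact K\"ahler manifold one has $d_{k}(g)=\rho\bigl(g^{*}\mid H^{k,k}(X,\bR)\bigr)$ (spectral radius; Gromov, Dinh--Sibony), that $d_{k}(g)\ge 1$ always (since $g^{*}$ is an automorphism of the lattice $H^{2k}(X,\bZ)$, so $\det g^{*}=\pm 1$ and the product of the moduli of its eigenvalues is $1$), that $k\mapsto\log d_{k}(g)$ is concave (log-concavity of dynamical degrees; Khovanskii--Teissier, Gromov, Dinh--Sibony), that $d_{k}(g^{-1})=d_{2n-k}(g)$ by Poincar\'e duality, and the Gromov--Yomdin theorem $h(g)=\log\max_{0\le k\le 2n}d_{k}(g)$. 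The ingredient special to hyperk\"ahler geometry is Verbitsky's theorem that the cup-product map $\mathrm{Sym}^{k}H^{2}(X,\bC)\to H^{2k}(X,\bC)$ is injective for $0\le k\le n$, together with the Beauville--Bogomolov--Fujiki form $q$ on $H^{2}(X,\bR)$.

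First I would produce the lower bound $d_{k}(g)\ge d_{1}(g)^{k}$ for $0\le k\le n$. Since $g^{*}$ preserves $q$ and the Hodge decomposition, it preserves the positive-definite plane $P=\langle\mathrm{Re}\,\sigma_{X},\mathrm{Im}\,\sigma_{X}\rangle$, on which it has eigenvalues of modulus one, and its $q$-orthogonal complement $H^{1,1}(X,\bR)$, on which $q$ has signature $(1,h^{1,1}(X)-1)$; in particular $\rho\bigl(g^{*}\mid H^{2}(X,\bR)\bigr)=\rho\bigl(g^{*}\mid H^{1,1}(X,\bR)\bigr)=d_{1}(g)$. By the classification of isometries of a Lorentzian space, $d_{1}(g)$ is a real eigenvalue $\lambda\ge 1$ of $g^{*}\mid H^{1,1}(X,\bR)$; if $\lambda>1$ an associated eigenvector $v$ is $q$-isotropic, since $\lambda^{2}q(v,v)=q(g^{*}v,g^{*}v)=q(v,v)$. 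By Verbitsky's theorem the cup power $v^{\cup k}$ is a nonzero class in $H^{k,k}(X,\bR)$ for $0\le k\le n$, and $g^{*}(v^{\cup k})=\lambda^{k}v^{\cup k}$, so $d_{k}(g)\ge\lambda^{k}=d_{1}(g)^{k}$; when $\lambda=1$ the bound is immediate from $d_{k}(g)\ge 1$.

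Next, concavity of $k\mapsto\log d_{k}(g)$ together with $d_{0}(g)=1$ forces $\log d_{k}(g)\le k\log d_{1}(g)$, i.e.\ $d_{k}(g)\le d_{1}(g)^{k}$, so with the previous step $d_{k}(g)=d_{1}(g)^{k}$ for $0\le k\le n$. Applying this to $g^{-1}$ and using $d_{1}(g^{-1})=d_{1}(g)$ (the non-degenerate $g^{*}$-invariant form $q$ makes $g^{*}$ and $(g^{*})^{-1}$ share the same characteristic polynomial, hence the same spectral radius, on $H^{2}$), the identity $d_{2n-k}(g)=d_{k}(g^{-1})$ gives $d_{2n-k}(g)=d_{1}(g)^{k}$ for $0\le k\le n$; in particular $d_{2n-k}(g)=d_{k}(g)$. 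Since every $d_{k}(g)\ge 1$ and $d_{1}(g)\ge 1$, the maximum of $d_{k}(g)$ over $0\le k\le 2n$ equals $d_{n}(g)=d_{1}(g)^{n}$, so Gromov--Yomdin gives $h(g)=n\log d_{1}(g)$; and if $g$ has positive entropy then $h(g)>0$ forces $d_{1}(g)>1$, whence the strict ``tent'' $1=d_{0}(g)<d_{1}(g)<\cdots<d_{n}(g)>\cdots>d_{2n}(g)=1$.

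I expect the main obstacle to be the lower bound: one must know that the leading eigenvalue of $g^{*}$ on $H^{2}(X,\bR)$ is attained by a genuine real eigenvector and that its $k$-fold cup power is still nonzero in $H^{k,k}(X,\bR)$ for $k\le n$, which is exactly where the Lorentzian structure of $(H^{1,1}(X,\bR),q)$ and Verbitsky's injectivity theorem are used. Everything else is an assembly of by-now-standard facts about dynamical degrees, and I do not anticipate difficulty there beyond checking that the cited results apply verbatim to automorphisms of hyperk\"ahler manifolds.
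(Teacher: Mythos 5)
Your proposal is correct and follows essentially the same architecture as the paper: Verbitsky's injectivity of $\mathrm{Sym}^{k}H^{2}(X,\bR)\to H^{2k}(X,\bR)$ for the lower bound $d_{k}(g)\ge d_{1}(g)^{k}$, log-concavity for the matching upper bound, Poincar\'e duality plus $d_{1}(g)=d_{1}(g^{-1})$ for the range $n\le k\le 2n$, and Gromov--Yomdin for the entropy. The one step where you genuinely diverge is the identity $d_{1}(g)=d_{1}(g^{-1})$: the paper deduces it from the arithmetic structure of the characteristic polynomial of $g^{*}\vert H^{2}(X,\bZ)$ (one Salem factor plus cyclotomic factors, citing [Og07]), whereas you observe that $g^{*}$ preserves the non-degenerate Beauville--Bogomolov--Fujiki form, so $(g^{*})^{-1}$ is conjugate to the transpose of $g^{*}$ and has the same spectrum; your argument is more elementary and self-contained, while the paper's yields the finer information needed later for Corollary 1.2 (that $d_{1}(g)$ is a Salem number). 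Your treatment of the lower bound is also slightly more hands-on (extracting a real isotropic Perron eigenvector $v$ and cupping it, rather than directly reading off $\rho(\mathrm{Sym}^{k}g^{*}\vert H^{2})=d_{1}(g)^{k}$ on the $m_{k}$-image), but both hinge on the same injectivity theorem and both are valid.
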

This behaviour fits well with the assumption 
made in \cite{DS09} Section 4.4. So, one can now apply the results there 
for automorphisms of hyperk\"ahler manifolds. 
\par
\vskip 4pt
A {\it Salem number} is a real algebraic integer 
$\alpha$ such that $\alpha > 1$, 
$1/\alpha$ is a Galois conjugate of $\alpha$ and all other Galois conjugates 
are of absolute value $1$. Salem numbers play important roles in complex dynamics on K3 surfaces, rational surfaces (\cite{Mc02}, \cite{Mc07}) and hyperk\"ahler manifolds (\cite{Og07} and references therein).

\begin{corollary}\label{salem} 
The dynamical degrees of automorphisms of hyperk\"ahler manifolds 
are Salem numbers or $1$.  
\end{corollary}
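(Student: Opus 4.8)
The plan is to derive Corollary \ref{salem} from Theorem \ref{hk} together with a little algebraic number theory. By Theorem \ref{hk} every dynamical degree $d_k(g)$ is a non-negative integral power of $d_1(g)$ (in fact $d_1(g)^{\min(k,\,2n-k)}$), so it suffices to prove: (i) that $d_1(g)$ is either $1$ or a Salem number; and (ii) that a positive integral power of a Salem number is again a Salem number (while every power of $1$ is $1$). Since $d_1(g)\ge 1$ always, and the case $d_1(g)=1$ leaves nothing to prove, from now on I assume $d_1(g)>1$.

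I would handle (ii) first, by a Galois-theoretic computation. If $\alpha$ is a Salem number and $m\ge 1$, then $\alpha^m$ is a real algebraic integer with $\alpha^m>1$, and since $\bQ(\alpha^m)\subseteq\bQ(\alpha)$ the Galois conjugates of $\alpha^m$ are exactly the numbers $\beta^m$ as $\beta$ ranges over the conjugates of $\alpha$. Among these, $1/\alpha^m=(1/\alpha)^m$ occurs; every other one is $\beta^m$ with $|\beta|=1$, hence has absolute value $1$ and in particular differs from both $\alpha^m$ and $1/\alpha^m$. So $\alpha^m$ is a Salem number, and (ii) holds.

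For (i): since $g$ is holomorphic, $g^*$ respects the Hodge decomposition of $H^2(X,\bC)$ and scales $\sigma_X$ by a factor of absolute value $1$ (because $q_X(\sigma_X,\overline{\sigma_X})\ne 0$ is preserved, where $q_X$ is the Beauville--Bogomolov--Fujiki form); thus the two eigenvalues of $g^*$ on $H^{2,0}\oplus H^{0,2}$ are unimodular. On $H^{1,1}(X,\bR)$ the form $q_X$ has signature $(1,\,h^{1,1}(X)-1)$, and $g^*$ preserves the connected component of $\{q_X>0\}$ containing the K\"ahler cone (pullbacks of K\"ahler forms are K\"ahler forms); so by the classical elliptic/parabolic/hyperbolic trichotomy for isometries of a Lorentzian space, and because $d_1(g)>1$, the spectral radius $d_1(g)$ of $g^*|_{H^{1,1}(X,\bR)}$ is a simple real eigenvalue, $1/d_1(g)$ is also an eigenvalue, and all remaining eigenvalues of $g^*|_{H^{1,1}}$ lie on the unit circle. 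Combining these two facts, the characteristic polynomial $P(t)\in\bZ[t]$ of $g^*|_{H^2(X,\bZ)}$ has $d_1(g)$ among its roots (so $d_1(g)$ is an algebraic integer), every root of $P$ lies in $\{d_1(g),\,1/d_1(g)\}\cup\{z:|z|=1\}$, and $P$ is reciprocal, i.e.\ its set of roots is stable under $\mu\mapsto 1/\mu$ (as $g^*$ is an isometry of $q_X$).

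The one delicate point — and the step I expect to be the crux — is to see that $1/d_1(g)$ is a genuine Galois conjugate of $d_1(g)$, not merely an eigenvalue of $g^*$. Let $S(t)\in\bZ[t]$ be the minimal polynomial of $d_1(g)$. If $1/d_1(g)$ were not a root of $S$, then by the above every root of $S$ other than $d_1(g)$ would be unimodular, so $|S(0)|$, the product of the absolute values of the roots, would equal $d_1(g)$; since $S(0)\in\bZ$, this would force $d_1(g)$ to be a rational integer, necessarily $\ge 2$. But then, $P$ being reciprocal, $1/d_1(g)$ would be a root of $P$, hence an eigenvalue of the integral matrix $g^*|_{H^2(X,\bZ)}$, which is absurd since $1/d_1(g)$ is not an algebraic integer. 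Therefore $1/d_1(g)$ is a conjugate of $d_1(g)$; combined with the location of the roots of $P$, this shows $d_1(g)$ is a Salem number, proving (i) and hence the Corollary. Every ingredient other than this reciprocity--integrality argument is either Theorem \ref{hk} or standard linear algebra of isometries of hyperbolic lattices.
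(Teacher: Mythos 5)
Your proposal is correct, and the reduction of the general case to the two claims (i) $d_1(g)$ is $1$ or a Salem number, and (ii) positive powers of Salem numbers are Salem numbers, is exactly the skeleton of the paper's argument: the paper also concludes by observing that every Galois conjugate of $d_1(g)^k$ is $\beta^k$ for a conjugate $\beta$ of $d_1(g)$ and conversely. The difference is in how claim (i) is obtained. The paper simply cites \cite{Og07} Section 2.2 for the fact that the characteristic polynomial of $g^*\vert H^2(X,\bZ)$ is a product of at most one Salem polynomial and cyclotomic polynomials, which immediately gives that $d_1(g)$ is a Salem number when $h(g)>0$. You instead reprove the needed portion of that result from first principles: the signature $(1,h^{1,1}-1)$ of the Beauville--Bogomolov--Fujiki form on $H^{1,1}(X,\bR)$, the unimodularity of the action on $H^{2,0}\oplus H^{0,2}$, the reciprocity of the characteristic polynomial of an integral isometry, and --- the genuinely delicate step, which you handle correctly --- the argument that $1/d_1(g)$ is an actual Galois conjugate of $d_1(g)$ rather than merely another eigenvalue (if it were not, $d_1(g)$ would be a rational integer $\ge 2$ and $1/d_1(g)$ would be a non-integral algebraic integer among the roots of a monic integral polynomial, a contradiction). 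This is essentially the standard McMullen-style argument underlying the cited reference, so the two proofs are the same in spirit; what your version buys is self-containedness, at the cost of redoing work the paper outsources. All the individual steps you invoke (the Lorentzian trichotomy, $|S(0)|$ being the product of the absolute values of the roots, the behaviour of conjugates under powers) are sound, and your claim $d_k(g)=d_1(g)^{\min(k,2n-k)}$ is exactly what Theorem \ref{hk} provides.
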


Dynamical degrees are natural, important invariants in complex dynamics of automorphisms, but there are very few examples in dimension $\ge 3$ 
for which explicit computations are made. 
\par
\vskip 4pt
The next aim of this note is to give a few explicit computaions of dynamical degrees of automorphisms of $S^{[n]}$ (Examples 1, 2 in Section 4). Here and hereafter $S^{[n]}$ 
is the Hilbert scheme of $0$-dimensional subschemes of length $n$ on a K3 surface $S$. As it is well known, $S^{[n]}$ is a $2n$-dimensional hyperk\"ahler manifold (\cite{Fu83} for $n =2$, \cite{Be83} for any $n \ge 2$). 
\par
\vskip 4pt
Example 1 is elementary and everything is very concrete, but 
automorphisms under consideration are all induced from automorphisms of $S$. In Example 2, we construct automorphisms of positive entropy of some $S^{[2]}$ {\it that are not induced from any automorphisms of any $S'$ such that $(S')^{[2]} \simeq S^{[2]}$}. A similar construction in a more general context was also made by O'Grady \cite{OGr05}, Section 4 (see especially subsection 4.4). For our explicit computation of the dynamical degrees, the specification made in Lemma 4.5 is convenient. 
\par
\vskip 4pt
Theorem \ref{hk} and Examples 1, 2 will partially  
answer to the question of Professor Nessim Sibony to me (June 2008):
 
\begin{question}\label{qnsibony}
Besides automorphisms of complex tori or automorphisms of product 
type (\cite{DS09} Section 4.4), are there any explicit examples of 
automorphisms $f$ of higher dimensional manifolds $M$ 
such that the dynamical degrees 
$d_k(f)$ ($k \in [0, \dim\, M/2] \cap \bZ$) are strictly increasing, i.e., mutually distinct?
\end{question}
In the view of Corollary \ref{salem}, the following question might be of 
interest:
\begin{question}\label{qnsalem}
Which Salem numbers are realizable as dynamical degrees of automorphisms 
of hyperk\"ahler manifolds? In particular, is the Lehmer number, i.e., 
the smallest known Salem number 
$$\lambda_{\rm Lehmer} = 1.17628...$$
realizable in this way? 
\end{question}
We note that the Lehmer number is realizable as the first dynamical degree 
of a rational surface automorphism \cite{Mc07}. 
\par
\vskip 4pt

\section{Dynamical degrees of automorphisms - a few known results}
\setcounter{lemma}{0}
In this section, we briefly review some basic notions 
and some {\it known} properties of dynamical degrees. Our sources are \cite{DS05} Section 2 and \cite{Gu05} Section 1. \cite{Zh08} Section 2 also provides a very good survey particularly for algebraic geometers. For more details or 
any history, see these papers and the references therein.
\par
\vskip 4pt
Let $M$ be a compact K\"ahler manifold of dimension $m$ and $f$ be a biholomorphic automorphism of $M$. Let $k$ be an integer such that $0 \le k \le m$. 
Then, the {\it $k$-th dynamical degree} $d_k(f)$ of $f$ is defined by
$$d_k(f) = {\lim}_{\ell \lra \infty} (\int_M (f^{\ell})^*\eta^k 
\wedge \eta^{m-k})^{1/\ell}\, .$$
Here $\eta$ is a K\"ahler form on $M$. It is well known that $d_k(f)$ does not depend on the choice of $\eta$. Moreover, 
$$d_k(f) = \rho(f^{*} \vert H^{k,k}(M, \bR)) = 
\rho(f^{*} \vert H^{2k}(M, \bR))\, .$$
Here $\rho(*)$ denotes the spectral radius of the linear map $*$. 
Probably, the first important property of $d_k(f)$ 
is the following symmetry:
$$d_{k}(f) = d_{m-k}(f^{-1})\,\, .$$ 
This follows from the Poincar\'e duality. As a more interesting property, it is also known 
that  
the function 
$$[0, m] \ni k \mapsto \log d_k(f) \in \bR$$
is always concave (see eg. \cite{Gu05} Proposition 1.2, \cite{DS05} Proposition 2.6). From these two properties and $d_0(f) = 1$, we have the following general behaviour of the dynamical degrees: 
\begin{theorem}\label{gen}
There are $p, q \in [0, m] \cap \bZ$ such that $p \le q$ and 
$$1 = d_0(f) < \cdots < d_p(f) = \cdots = d_{q}(f) > \cdots > d_m(f) = 1\, .$$
\end{theorem}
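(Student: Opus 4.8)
The plan is to reduce the statement to an elementary property of the concave sequence $a_k := \log d_k(f)$, $k = 0, 1, \dots, m$, using only the two facts recalled just above: the symmetry $d_k(f) = d_{m-k}(f^{-1})$ and the concavity of $k \mapsto \log d_k(f)$ on $[0,m]$. First I would record the two endpoint values. We are given $d_0(f) = 1$, so $a_0 = 0$; and applying the symmetry with $k = m$ gives $d_m(f) = d_0(f^{-1}) = 1$, so $a_m = 0$ as well. Thus the problem becomes purely combinatorial: a concave function on $[0,m]$ that vanishes at both endpoints, read off at the integer points, must have the claimed unimodal shape.

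Next I would translate concavity into a statement about consecutive differences. Writing $\delta_k := a_k - a_{k-1}$ for $1 \le k \le m$, concavity of $k \mapsto a_k$ evaluated at the integers is exactly the assertion that the differences are non-increasing, $\delta_1 \ge \delta_2 \ge \cdots \ge \delta_m$. Two consequences follow. On one hand, since a concave function lies above the chord joining its endpoint values, and that chord is here identically $0$, we get $a_k \ge 0$, i.e. $d_k(f) \ge 1$ for all $k$; so the endpoint values $d_0 = d_m = 1$ are the minima. On the other hand, because the $\delta_k$ are non-increasing, their signs occur in the order $+ \cdots +\, 0 \cdots 0\, - \cdots -$: a (possibly empty) block of strictly positive differences, then a block of zeros, then a block of strictly negative ones.

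Finally I would read off $p$ and $q$ from these sign blocks. Set $p$ to be the number of strictly positive differences and $q$ the number of non-negative differences, so that $0 \le p \le q \le m$. For $k \le p$ we have $\delta_k > 0$, giving $d_0 < d_1 < \cdots < d_p$; for $p < k \le q$ we have $\delta_k = 0$ (being simultaneously $\ge 0$ and $\le 0$), giving $d_p = \cdots = d_q$; and for $k > q$ we have $\delta_k < 0$, giving $d_q > \cdots > d_m$. This is precisely the desired chain. The degenerate cases are handled automatically: if all $d_k = 1$ then every $\delta_k = 0$, so $p = 0$ and $q = m$ and both strict segments are empty, while the inequalities $a_k \ge 0$ together with $a_0 = a_m = 0$ rule out a globally decreasing sequence, so the case $p = q = 0$ (all $\delta_k < 0$) cannot occur unless $m = 0$. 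Since the genuine analytic content here, namely the concavity of the logarithms of the dynamical degrees, is an imported result, there is no deep obstacle: the only work is this bookkeeping of a concave sequence, and the one point requiring care is the strictness of the inequalities, which is exactly what the non-increasing-differences description secures.
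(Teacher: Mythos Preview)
Your proposal is correct and follows exactly the route the paper indicates: the paper does not spell out a proof of this theorem but simply states that it follows from the Poincar\'e-duality symmetry $d_k(f) = d_{m-k}(f^{-1})$, the concavity of $k \mapsto \log d_k(f)$, and $d_0(f) = 1$. Your argument is precisely the elementary bookkeeping that turns these three ingredients into the unimodal chain, so there is nothing to compare.
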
   
The {\it topological entropy} $h(f)$ of $f$ is a real number that measures the complexity of orbit space
$$x\, ,\, f(x)\, ,\, f^2(x)\, ,\, f^3(x)\, ,\, \cdots\, , f^m(x)\, ,\, \cdots 
\, .$$
Original definition of $h(f)$ is purely topological (\cite{Gr03}, Section 1), 
and the action on the cohomology ring never appears in the definition. However, for an automorphism $f$ of a compact K\"ahler manifold $X$, it turns out that 
$$h(f) = \log d_p(f) = {\rm max}\, \{\log d_k(f)\, \vert \, k \in [0,m] \cap \bZ\}\, ,$$
by the fundamental theorem due to Gromov and Yomdin. In this note, we only need this fact. Note then that $h(f) \ge 0$, and $h(f) > 0$ if and only if $d_1(f) > 1$ and that the function $d_k(f)$ is constantly $1$ when $h(f) = 0$. 
\par
\vskip 4pt
So, our new Theorem \ref{hk} says that {\it only two extreme cases in Theorem \ref{gen} can happen for automorphisms of hyperk\"ahler manifolds. That is, according to $h(g) =0$ or $h(g) > 0$, either $p = 0$ and $q = \dim\, X$ or $p = q = n = \dim\, X/2$}. 

\section{Proof of Theorem \ref{hk} and Corollary \ref{salem}}
\setcounter{lemma}{0}

In this section, we shall prove Theorem \ref{hk} and Corollary \ref{salem}. 
The proof is extremely simple. 
\par
\vskip 4pt
Let $k \in [0, n] \cap \bZ$. Consider the natural map 
$$m_k : {\rm Sym}^{k}H^2(X, \bR) \lra H^{2k}(X, \bR)\, .$$
$m_k$ is given by the multiplication of the cohomology 
ring $H^{*}(X, \bR)$. In particular, $m_k$ is compatible with the action 
of ${\rm Aut}\, X$. Moreover, by a result of Verbitsky \cite{Ve96} (see also \cite{Bo96} together with \cite{Bo78}, \cite{CY03} 
Part III, 
Proposition 24.1), the map $m_k$ is {\it injective} for $k$ in the 
range above. 
Thus $d_k(g) \ge d_1(g)^k$, i.e., 
$$\log d_k(g) \ge k \log d_1(g)\, .$$ On the other hand, as 
$k \mapsto \log d_k(g)$ is a concave function with $\log d_0(g) = 0$, 
we have that 
$$\log d_k(g) \le k \log d_1(g)\, .$$
Thus, $\log d_k(g) = k \log d_1(g)$, and hence $d_k(g) = d_1(g)^k$ 
for each $k \in [0, n] \cap \bZ$. 
\par
\vskip 4pt
On the other hand, by \cite{Og07} Section 2.2, the characteristic polynomial of $g^* \vert H^2(X, \bZ)$ has at most one Salem polynomial factor, i.e., the minimal polynomial of a Salem number 
over $\bZ$, and the other factors are all cyclotomic polynomials. Thus, the set of eigenvalues of $g$ and $g^{-1}$ are the same. We should note that {\it this is a very special property of an automorphism of hyperk\"ahler manifolds}. In particular, 
$$d_1(g) = d_1(g^{-1})\, .$$ 
Hence,  
$$d_{2n-k}(g) = d_{k}(g^{-1}) = d_1(g^{-1})^k = d_1(g)^k\, .$$
This completes the proof of Theorem \ref{hk}.
\par
\vskip 4pt
Let us show Corollary \ref{salem}. The result is clear if $h(g) = 0$. Assume 
that $h(g) > 0$. Then, again by \cite{Og07} Section 2.2, the characteristic polynomial of $g^* \vert H^2(X, \bZ)$ has 
a Salem polynomial factor. Thus, $d_1(g)$ is a Salem number. Let $\gamma$ be a Galois conjugate of $d_1(g)^k$. Then, 
$\gamma$ is of 
the form $\beta^k$ for some Galois conjugate $\beta$ of $d_1(g)$ and vice 
versa. This implies the result. 
\begin{remark}\label{simple}
Let $X$ be a hyperk\"ahler $4$-fold which is deformation equivalent to the Hilbert scheme of a K3 surface, say, $S^{[2]}$. Then $b_2(X) = b_2(S^{[2]}) = 23$ 
and $b_4(X) = b_4(S^{[2]}) = 276$. (This is well known and is indeed easy to see from the explicit description of $S^{[2]}$.) Thus, the injective map $m_2$ 
is actually an isomorphism for dimension reason and we have that $H^4(X, \bR) = {\rm Sym}^{2}H^2(X, \bR)$. Thus, if $g \in {\rm Aut}\, X$ is of positive entropy, then the dynamical degree $d_k(g)$ is an eigenvalue of $g^* \vert H^{2k}(X, \mathbf R)$ {\it of multiplicity one} for each $k$. {\it It would be interesting to see to what extent this multiplicity one property of dynamical degrees holds for automorphisms of hyperk\"ahler manifolds of positive entropy}.
\end{remark}

\begin{remark}\label{amerik}
Recall that the Hilbert scheme $F(V)$ of lines on a smooth cubic $4$-fold $V$ is deformation equivalent to $S^{[2]}$ \cite{BD85}. In \cite{Am07}, Amerik 
computed 
the dynamical degrees of a natural, very interesting self rational map of 
degree $16$ on $F(V)$. They are 
$$d_0 = 1\, ,\, d_1 = 7\, ,\, d_2 = 31\, ,\, d_3 = 28\, ,\, d_4 = 16\, .$$ 
It is remarkable that the behaviour is quite different from the case of automorphisms in Theorem \ref{hk}. This difference is pointed out to us by Professor Fr\'ed\'eric 
Campana. 
\end{remark}

\section{A few explicit examples}
\setcounter{lemma}{0}

Let $S$ be a K3 surface and $S^{[n]}$ be the Hilbert scheme of $n \ge 2$ points on $S$. In this section, we shall compute the dynamical degrees of some automorphisms of $S^{[n]}$ in two specific situations (Examples 1, 2).
\par
\vskip 4pt
\begin{lemma}\label{autk3}
Let $g$ be an automorphism of $S$ and $g_n$ be the automorphism of $S^{[n]}$ 
induced from $g$. Then $d_1(g_n) = d_1(g)$, and 
$d_{2n-k}(g_n) = d_k(g_n) = d_1(g)^k$ 
for $k \in [0, n] \cap \bZ$. In particular, $h(g_n) = n \log\, d_1(g)$. 
\end{lemma}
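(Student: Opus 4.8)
The plan is to deduce the whole statement from Theorem~\ref{hk}, applied to the hyperk\"ahler $2n$-fold $S^{[n]}$, after separately establishing the single equality $d_1(g_n) = d_1(g)$. Since $S^{[n]}$ is a hyperk\"ahler manifold of dimension $2n$ (\cite{Be83}) and $g_n \in \Aut\, S^{[n]}$, Theorem~\ref{hk} gives at once $d_{2n-k}(g_n) = d_k(g_n) = d_1(g_n)^k$ for all $k \in [0,n]\cap\bZ$ and $h(g_n) = n\log\, d_1(g_n)$; so the only remaining task is to compute $d_1(g_n) = \rho(g_n^{*}\vert H^2(S^{[n]}, \bR))$ and see that it equals $d_1(g)$.

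For that I would use Beauville's description of the second cohomology \cite{Be83}: $H^2(S^{[n]}, \bZ) = \mu(H^2(S, \bZ)) \oplus \bZ\delta$, where $\mu$ is the natural embedding (essentially $\pi^{*}$ under the canonical identification $H^2(S^{(n)},\bZ)\cong H^2(S,\bZ)$, the map $\pi : S^{[n]} \lra S^{(n)}$ being the Hilbert--Chow morphism onto the $n$-th symmetric product), $2\delta$ is the class of the $\pi$-exceptional divisor $E$, and the two summands are orthogonal for the Beauville--Bogomolov--Fujiki form $q$. The key point is that this decomposition is $g_n^{*}$-stable. By construction $g_n$ is compatible with the automorphism $g^{(n)}$ of $S^{(n)}$ induced by $g$, i.e.\ $\pi\circ g_n = g^{(n)}\circ\pi$, and under $H^2(S^{(n)},\bZ)\cong H^2(S,\bZ)$ the map $(g^{(n)})^{*}$ becomes $g^{*}$ (because $H^1(S)=0$, so $H^2(S^{(n)})$ is the diagonal copy of $H^2(S)$); hence $\mu\circ g^{*} = g_n^{*}\circ\mu$ and $V := \mu(H^2(S,\bR))$ is $g_n^{*}$-stable. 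As $g_n^{*}$ also preserves $q$, which is nondegenerate on $V$, the line $\bR\delta = V^{\perp}$ is $g_n^{*}$-stable as well, and $g_n^{*}$ acts on it by $\pm 1$ (indeed by $+1$, since $E$ is effective and $g_n$-invariant, but only $\rho = 1$ on this line is needed).

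Therefore $g_n^{*}\vert H^2(S^{[n]}, \bR)$ is, as a linear map, the direct sum of $g^{*}\vert H^2(S, \bR)$ and $\id_{\bR\delta}$, so
$$d_1(g_n) = \rho\big(g_n^{*}\vert H^2(S^{[n]}, \bR)\big) = \max\{\rho(g^{*}\vert H^2(S, \bR)),\, 1\} = d_1(g),$$
the last equality because $d_1(g) = \rho(g^{*}\vert H^2(S, \bR)) \ge d_0(g) = 1$, as recalled in Section~2. Feeding $d_1(g_n) = d_1(g)$ back into the conclusions of Theorem~\ref{hk} then gives $d_{2n-k}(g_n) = d_k(g_n) = d_1(g)^k$ for $k \in [0,n]\cap\bZ$ and $h(g_n) = n\log\, d_1(g_n) = n\log\, d_1(g)$, completing the proof.

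The main — though quite mild — obstacle is the naturality statement in the second paragraph: that Beauville's embedding $\mu$ intertwines $g^{*}$ and $g_n^{*}$ (equivalently, that it is $\pi^{*}$ of a $g^{(n)}$-equivariant class) and that $\bR\delta$ is $g_n^{*}$-stable. I would make sure to derive this from the $g_n$-equivariant Hilbert--Chow morphism together with the $\Aut$-invariance of the Beauville--Bogomolov--Fujiki form, rather than invoking $\mu$ as an abstract black box; once this is granted, everything else is purely formal given Theorem~\ref{hk}.
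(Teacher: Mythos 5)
Your proposal is correct and follows essentially the same route as the paper: Beauville's orthogonal decomposition $H^2(S^{[n]},\bZ)=H^2(S,\bZ)\oplus\bZ[e]$, on which $g_n^*$ acts as $g^*\oplus\id$, gives $d_1(g_n)=d_1(g)$, and Theorem \ref{hk} applied to the hyperk\"ahler manifold $S^{[n]}$ does the rest. Your extra care in justifying the $g_n^*$-stability of the decomposition (via equivariance of the Hilbert--Chow morphism and nondegeneracy of the Beauville--Bogomolov--Fujiki form) only makes explicit what the paper leaves implicit.
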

\begin{proof} Let $E$ be the exceptional divisor of the Hilbert-Chow morphism 
$$S^{[n]} \lra S^{(n)} = S^n/\Sigma_n\, .$$
Then, by \cite{Be83} (see also \cite{CY03} Part III, Section 23), $H^2(S^{[n]}, \bZ)$ carries a natural integral-valued symmetric bilinear form, called the 
{\it Beauville-Bogomolov-Fujiki's form}. Via the natural isomorphism $H^2(S, \bZ) \simeq H^2(S^n, \bZ)^{\Sigma_n}$, we can embed $H^2(S, \bZ)$ into $H^2(S^{[n]}, \bZ)$. \cite{Be83} shows that 
this embedding gives a natural Hodge isometry 
$$H^2(S^{[n]}, \bZ) \simeq H^2(S, \bZ) \oplus \bZ [e]\, .$$
Here $e := E/2 \in H^2(S^{[n]}, \bZ)$ and $(e^2) = -2n + 2$. Since $g_n$ 
is induced from $g$, under the isometry above, we have that $g_n^*(e) = e$, $g_n^*(H^2(S, \bZ)) = H^2(S, \bZ)$ and $g_n^* \vert H^2(S, \bZ) = g^* \vert H^2(S, \bZ)$. Thus, $d_1(g_n) = d_1(g)$. This together with Theorem \ref{hk} implies 
the result. 
\end{proof} 
\noindent
{\it Example 1.} Let $E$ be an elliptic curve. Put $A = E \times E$. Let $S = {\rm Km}\, (A)$ be the Kummer K3 surface associated with $A$, i.e., the minimal 
resolution of the quotient surface 
$A/\langle -1 \rangle$. Let 
$$M := \left( \begin{array}{cc} a& b\\ 
c &  d \end{array}\right) \in\, {\rm SL}\,(2, \bZ)\, .$$
Then $M$ defines an automorphism $f_M$ of $A$ by
$$A \ni (x, y) \mapsto (ax + by, cx + dy) \in A\, .$$
Since $f_M$ commutes with the inversion $-1$ of $A$, 
it descends to an automorphism $g_{M}$ of $S$, and therefore, 
induces an automorphism $g_{M, n}$ on $S^{[n]}$. Recall that  
$$H^2(S, \bQ) \simeq H^2(A, \bQ) \oplus \oplus_{i=1}^{16} \bQ [e_i]\, ,$$
where $e_i$ ($1 \le i \le 16$) are the exceptional divisors of 
$S \lra A/\langle -1 \rangle$. 
This isomorphism is compatible with the actions of $f_M$ 
and $g_M$. In particular, the subspaces 
$\oplus_{i=1}^{16} \bQ [e_i]$ is stable under the action. Moreover, 
the cup product is negative definite on 
$\oplus_{i=1}^{16} \bQ [e_i]$. 
Thus, $d_1(g_M) = d_1(f_M)$. Hence $d_1(g_{M, n}) = d_1(f_M)$ and 
$h(g_{M, n}) = n \log d_1(f_M)$ by Lemma \ref{autk3}. 
\begin{lemma}\label{lin}
Let $t = a +d$, 
$$\alpha = \frac{t + \sqrt{t^2 - 4}}{2}\,\, ,\,\, \beta = \frac{t - \sqrt{t^2 - 4}}{2}\,\, .$$
$\alpha$ and $\beta$ are the eigenvalues of the matrix $M$ 
counted with multiplicities. Then
$$d_{1}(f_M) = 1\,\, {\rm if}\,\, \vert t \vert \le 2\, ,$$ 
$$d_{1}(f_M) = \alpha^2 > 1\,\, {\rm if}\,\, t > 2\, ,$$
$$d_{1}(f_M) = \beta^2 > 1\,\, {\rm if}\,\, t < -2\, .$$
\end{lemma}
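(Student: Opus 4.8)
The plan is to reduce the statement to the computation of the spectral radius of $f_M^{*}$ on $H^2(A, \bR)$. As recalled in Section 2 we have $d_1(f_M) = \rho(f_M^{*} \vert H^2(A, \bR))$, and since $A = E \times E$ is a complex $2$-torus its real cohomology ring is the exterior algebra on $H^1$, i.e. $H^{\bullet}(A, \bR) = \bigwedge^{\bullet} H^1(A, \bR)$; as $f_M^{*}$ is a ring homomorphism, this gives $f_M^{*} \vert H^2(A, \bR) = \bigwedge^2 \bigl( f_M^{*} \vert H^1(A, \bR) \bigr)$. So everything is controlled by the action on $H^1$.

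First I would pin down the action on $H^1$. Using $A = E \times E$ and the K\"unneth isomorphism $H^1(A, \bZ) \simeq H^1(E, \bZ)^{\oplus 2}$, together with the fact that $f_M$ is the group homomorphism $(x, y) \mapsto (ax + by, cx + dy)$, one computes that $f_M^{*} \vert H^1(A, \bZ)$ is (up to transpose, which is irrelevant for eigenvalues) the map $M \otimes \id_{H^1(E, \bZ)}$. Its characteristic polynomial is therefore $(x^2 - tx + 1)^2$, where I use that $\det M = 1$ since $M \in {\rm SL}\,(2, \bZ)$; hence the eigenvalues of $f_M^{*} \vert H^1(A, \bR)$, counted with multiplicity, are $\alpha, \alpha, \beta, \beta$, with $\alpha + \beta = t$ and $\alpha \beta = 1$.

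Passing to $\bigwedge^2$, the eigenvalues of $f_M^{*} \vert H^2(A, \bR)$ are the pairwise products of $\alpha, \alpha, \beta, \beta$, namely $\alpha^2$, $\beta^2$, and $\alpha\beta = 1$ with multiplicity four. Thus $d_1(f_M) = \max \{ \vert \alpha \vert^2, \vert \beta \vert^2, 1 \}$, and it only remains to run the elementary case analysis on the integer $t = a + d$. If $\vert t \vert \le 2$: either $\vert t \vert < 2$ and then $\beta = \bar{\alpha}$, so $\vert \alpha \vert^2 = \alpha \beta = 1$, or $t = \pm 2$ and then $\alpha = \beta = \pm 1$; in both cases all eigenvalues on $H^2$ have modulus $1$, so $d_1(f_M) = 1$. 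If $t > 2$: then $\alpha = (t + \sqrt{t^2 - 4})/2 > 1$ and $0 < \beta = 1/\alpha < 1$, so $d_1(f_M) = \alpha^2 > 1$. If $t < -2$: then $\beta = (t - \sqrt{t^2 - 4})/2 < -1$ and $-1 < \alpha = 1/\beta < 0$, so $d_1(f_M) = \beta^2 > 1$.

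I do not anticipate a real obstacle — the argument is a direct computation. The two places that call for a little care are the identification of the eigenvalues on $H^1$ (harmless, since $M \in {\rm SL}\,(2, \bZ)$ forces $\{\alpha, \beta\}$ to be real with $\alpha\beta = 1$, so $M$, $M^{T}$ and $(M^{T})^{-1}$ all have characteristic polynomial $x^2 - tx + 1$) and the boundary values $t = \pm 2$, where $f_M^{*} \vert H^1(A, \bR)$ fails to be semisimple but still has all eigenvalues of modulus $1$, so that the spectral radius on $H^2$ is again $1$.
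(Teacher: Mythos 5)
Your argument is correct; it differs from the paper's only in the choice of decomposition of the degree-two cohomology. The paper works with $H^{1,1}(A,\bR)$ directly (which also computes $d_1$, by the identity $d_1(f)=\rho(f^*\vert H^{1,1})=\rho(f^*\vert H^2)$ recalled in Section 2) and uses the torus isomorphism $H^{1,1}(A,\bC)\simeq H^0(A,\Omega_A^1)\otimes\overline{H^0(A,\Omega_A^1)}$, so that the relevant eigenvalues are the four products $\alpha\overline{\alpha},\alpha\overline{\beta},\beta\overline{\alpha},\beta\overline{\beta}$; you instead take all of $H^2(A,\bR)=\bigwedge^2H^1(A,\bR)$ with $f_M^*\vert H^1$ having eigenvalues $\alpha,\alpha,\beta,\beta$, getting the six eigenvalues $\alpha^2,\beta^2,1,1,1,1$. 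The two spectra have the same maximum modulus in every case, so the case analysis comes out identically. Your route is slightly more elementary (only K\"unneth and exterior algebra, no Hodge decomposition), at the cost of carrying two extra trivial eigenvalues; the paper's route isolates the $(1,1)$-part, which is the piece that actually matters for K\"ahler classes. Your attention to the non-semisimple boundary cases $t=\pm2$ is a point the paper passes over silently, and your observation that transposition does not affect the spectrum correctly disposes of the only bookkeeping subtlety in identifying $f_M^*\vert H^1$ with $M\otimes\id$.
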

\begin{proof} $d_1(f_M)$ is the spectral radius of $f_M^* \vert H^{1,1}(A, \bR)$. Since $A$ is a $2$-dimensional complex torus, we have the following natural isomorphism:
$$H^{1,1}(A, \bR) \otimes \bC = H^{1,1}(A, \bC) \simeq H^0(A, \Omega_A^1) \otimes \overline{H^0(A, \Omega_A^1)}\, .$$
Thus, the eigenvalues of $f_M^* \vert H^{1,1}(A, \bR)$ are $\alpha\overline{\alpha}$, $\alpha\overline{\beta}$, $\beta\overline{\alpha}$, 
$\beta\overline{\beta}$, counted with multiplicities. This implies the result.
\end{proof}
Hence, according to $\vert t \vert \le 2$, $t > 2$, $t < -2$, 
we have 
$$d_{k}(g_{M,n}) = 1\,\, (\forall k \in [0,2n] \cap \bZ)\,\, ,$$ 
$$d_{2n-k}(g_{M,n}) = d_{k}(g_{M,n}) = 
(\frac{t +\sqrt{t^2 -4}}{2})^{2k}\,\, (\forall k \in [0, n] \cap \bZ)\,\, ,$$
$$d_{2n-k}(g_{M,n}) = d_{k}(g_{M,n}) = 
(\frac{t -\sqrt{t^2 -4}}{2})^{2k}\,\, (\forall k \in [0, n] \cap \bZ)\,\, .$$
For the entropy, $h(g_{M, n}) = 0$ in the first case, and 
$h(g_{M, n}) > 0$ in the last two cases.  
\par
\vskip 4pt
\noindent
{\it Example 2.} In this example, we present automorphisms of positive entropy of some $S^{[2]}$ that are not induced from 
automorphisms of any $S'$ such that $(S')^{[2]} \simeq S^{[2]}$. Note that 
there is a pair of K3 surfaces $X$, $Y$ such that $X \not\simeq Y$ but 
$X^{[2]} \simeq Y^{[2]}$ (\cite{Yo01} Example 7.2). 
\par
\vskip 4pt
Recall the natural identification (made in the proof of 
Lemma \ref{autk3})
$$H^2(S^{[2]}, \bZ) = H^2(S, \bZ) \oplus \bZ [e]\, .$$
Under this identification, we have 
$$NS(S^{[2]}) = NS(S) \oplus \bZ [e]\, .$$ 
The involution $\iota$ in the next Lemma is constructed by Beauville 
\cite{Be82}: 
\begin{lemma}\label{line}
Let $S \subset \bP^3$ be a quartic K3 surface. Then the line $\overline{P_1P_2}$ joining two general points $P_1, P_2$ of $S$ meets $S$ in two other points, 
say, 
$P_3$, $P_4$. The correspondence $\{P_1, P_2\} \mapsto \{P_3, P_4\}$ defines 
the birational involution 
$$\iota : S^{[2]} \rh S^{[2]}\, .$$
For this involution, one has:

(1) $\iota$ is a biholomorphic involution if and only if $S$ contains no line.

(2) Under the assumption (1), $\iota^* H = 3H -4e$ and $\iota^*e = 2H - 3e$. Here $H$ is the element of 
$NS(S^{[2]})$ corresponding to the hyperplane class of $S$.

(3) Under the assumption (1), $H^{2}(S^{[2]}, \bZ)^{\iota^*} = \bZ \langle H -e \rangle$. 
\end{lemma}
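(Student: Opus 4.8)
The statement to prove is Lemma~\ref{line}, which describes Beauville's birational involution $\iota$ on $S^{[2]}$ for a quartic K3 surface $S\subset\bP^3$, together with the three assertions about when it is biholomorphic, its action on $NS(S^{[2]})=NS(S)\oplus\bZ[e]$, and the computation of the invariant lattice. I would organize the proof into three parts corresponding to (1), (2), (3).

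For part (1), the key is to identify exactly when the rational map $\iota$ fails to be a morphism. Given a length-two subscheme $\xi$ of $S$, it spans a (possibly infinitely near) line $L_\xi$ in $\bP^3$; generically $L_\xi\cap S$ consists of four distinct points and $\iota(\xi)$ is the complementary pair. The map breaks down precisely where the residual scheme $L_\xi\cap S - \xi$ is not a well-defined length-two subscheme, i.e.\ when $L_\xi\subset S$. So the plan is: show $\iota$ is defined and finite away from the locus $\{\xi : L_\xi\subset S\}$, and that $\iota$ is its own inverse there; then observe that if $S$ contains no line this bad locus is empty, so $\iota$ is a morphism, and being a birational self-map of a hyperk\"ahler manifold that is everywhere defined, it is biholomorphic (a birational morphism between minimal—in particular, $K$-trivial—varieties that are isomorphic in codimension one is an isomorphism); conversely if $S$ contains a line $\ell$, then the pencil of length-two subschemes of $\ell$ lies in the indeterminacy locus, so $\iota$ is not a morphism. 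I would lean on Beauville's original construction \cite{Be82} here rather than reproving finiteness from scratch.

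For part (2), assume $S$ contains no line, so $\iota$ is a biholomorphic involution. To compute $\iota^*$ on $NS(S^{[2]})=\bZ H\oplus\bZ e$ I would use the Beauville--Bogomolov--Fujiki form $q$: since $\iota^*$ is an isometry of the rank-two lattice with $q(H)=4$ (the hyperplane class squares to $4$ in $S$, with $e$ orthogonal to $NS(S)$) and $q(e)=-2$ (from $(e^2)=-2n+2=-2$ for $n=2$), and since $\iota$ is an involution with a known fixed class, the options are very constrained. Concretely, the fixed line $H^2(S^{[2]},\bZ)^{\iota^*}$ must be generated by the class of the natural divisor parametrizing subschemes $\xi$ that meet a fixed hyperplane section in a non-generic way, or more directly: the class $H-e$ is the one whose associated linear system gives the Lagrangian fibration / the map to $\bP^2$ intrinsic to the geometry, and $\iota$ preserves it. Writing $\iota^*H=aH+be$, $\iota^*e=cH+de$, the conditions $(\iota^*)^2=\id$, $q$-isometry, and $\iota^*(H-e)=H-e$ pin down $a=3, b=-4, c=2, d=-3$; I would verify this also matches a direct geometric computation of the degree of $\iota^*H$ (the push-pull of a hyperplane section under the incidence correspondence, which is a classical count giving intersection number $6$, consistent with $q(3H-4e)=9\cdot4+16\cdot(-2)=4$... wait, $=36-32=4=q(H)$, good, $\iota^*$ is an isometry, as it must be).

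For part (3), once $\iota^*=\begin{pmatrix}3&2\\-4&-3\end{pmatrix}$ on $NS(S^{[2]})$ is known, computing the full invariant sublattice $H^2(S^{[2]},\bZ)^{\iota^*}$ reduces to: on the transcendental part (the orthogonal complement of $NS(S)$ inside $H^2(S,\bZ)$) $\iota^*$ acts trivially or not? Beauville's involution is anti-symplectic—it sends $\sigma_{S^{[2]}}$ to $-\sigma_{S^{[2]}}$—hence on the transcendental lattice $\iota^*$ acts as $-\id$, contributing nothing to the invariant part; so the invariant lattice lies entirely in $NS(S^{[2]})$, and there one solves $(aH+be)\mapsto\ (3a+2b)H+(-4a-3b)e = aH+be$, giving $2a+2b=0$ and $-4a-4b=0$, i.e.\ $a=-b$, so $H^2(S^{[2]},\bZ)^{\iota^*}=\bZ\langle H-e\rangle$ as claimed. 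The main obstacle, I expect, is part~(2): getting the precise matrix entries of $\iota^*$ requires either trusting Beauville's computation or carrying out the incidence-correspondence intersection calculation carefully (identifying the exceptional class $e$ correctly and tracking how the line-construction interacts with the diagonal), and one must be careful that $\iota^*$ being anti-symplectic is what forces the sign on the transcendental lattice for part~(3).
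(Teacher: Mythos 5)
Your outline of (1) matches the paper (which simply cites Beauville's Proposition 11), and your anti-symplectic observation is exactly the mechanism the paper uses to kill the transcendental part in (3); the paper actually \emph{proves} $\iota^*\sigma_{S^{[2]}}=-\sigma_{S^{[2]}}$ by a local computation at a fixed point (a bitangent line), whereas you assert it, but that is citable. The real problem is with (2) and (3): your whole computation takes place inside the rank-two lattice $\bZ H\oplus\bZ e$, i.e.\ you implicitly assume $NS(S)=\bZ H$ (the generic quartic). The lemma is stated for an \emph{arbitrary} quartic with no line, and the paper needs it in exactly that generality: in Lemma \ref{matrix} it is applied to a quartic with $\rk NS(S)=2$, and there assertion (3) is the key input used to exclude the spurious solution $(a,b,c)=(4,-8,1)$. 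When $\rk NS(S)>1$ there is no a priori reason that $\iota^*$ preserves the sublattice $\bZ H\oplus\bZ e$, so your normal-form argument for (2) does not get started; and for (3), even granting (2), the invariant part of $NS(S^{[2]})$ cannot be computed without knowing $\iota^*$ on all of $NS(S^{[2]})$ -- which is precisely what is \emph{not} known at this stage (it is determined only later, in Lemma \ref{matrix}, \emph{using} (3)). So your argument for (3) is circular outside the generic case.

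The missing idea, which is the actual content of the paper's proof, is a deformation/specialization argument: embed $S$ in a general one-parameter family $\sS\to\Delta$ of quartics, use the countability of the Noether--Lefschetz jumping locus to choose a real-analytic arc $\gamma_c\ni 0$ meeting it only at $0$, note that $\iota$ extends to a biholomorphic involution $I$ of the relative Hilbert scheme over $\gamma_c$, and then (a) transport the Hassett--Tschinkel formula for generic $t$ to $t=0$ through the constant subsystem $\bZ\langle\sH_t,e_t\rangle$ of $R^2f_*\bZ$, and (b) observe that the invariant sublattice of a local system under a fibrewise action is itself constant, so $H^2(S^{[2]},\bZ)^{\iota^*}$ can be computed at a generic fibre where $NS$ has rank two. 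Without this (or some substitute, e.g.\ a direct incidence-correspondence computation of the class $\iota^*H$ valid for all $S$), your proof establishes the lemma only for $\rho(S)=1$, which is insufficient for the application in Lemma \ref{final}. Two smaller points: the class $H-e$ is the pullback of the Pl\"ucker polarization under the generically finite morphism $S^{[2]}\to{\rm Gr}(1,\bP^3)$, not a ``Lagrangian fibration to $\bP^2$''; and to pin down the matrix in (2) you must also rule out $\iota^*=\id$ on $\bZ H\oplus\bZ e$, which needs a word.
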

\begin{proof} (1) is proved by \cite{Be82} Proposition 11. When $NS(S) 
= \bZ H$, i.e., the generic case, (2) is proved by \cite{HT01} Subsection 5.2. 
For arbitrary $S$ with no line, we shall prove by reducing to the generic case 
via deformation. 
\par
\vskip 4pt
Let us consider a general $1$-dimensional small deformation of $S$ inside 
$\bP^3$:
$$\bP^3 \times \Delta \supset \sS \lra \Delta\, ,\, \sS_0 = S\, .$$
Let $\sH$ be the relative hyperplane class of $\sS$ over $\Delta$. 
Then $NS(\sS_t) = \bZ \langle \sH_t \rangle$ for generic $t \in \Delta$. Put  
$$\Delta_2 = \{s \in \Delta\, \vert\, {\rm rank}\, NS(\sS_s) \ge 2\}\, .$$ 
This set $\Delta_2$ is topologically dense in $\Delta$ but it consists of at most countably many points (\cite{Og03} Theorem 1.1). As $\bR$ is an uncountable set, 
for generic $c \in \bR$, the real analytic line 
$$\gamma_c =\{z \in \Delta\, \vert\, {\rm Im}\, z = c \cdot {\rm Re}\, z\} 
\subset \Delta$$ then satisfies 
$$0 \in \gamma_c\,\, {\rm and}\,\, \gamma_c \cap (\Delta_2 \setminus \{0\}) = 
\emptyset\, .$$ 
By construction, $NS(\sS_t) = \bZ \langle \sH_t \rangle$ for $t \in \gamma_c \setminus \{0\}$. 
By taking the relative Hilbert scheme over $\Delta$ and then restricting it
to $\gamma_c$, we have an analytic family 
$$f : \sM \lra \gamma_c$$
such that $\sM_t = \sS_{t}^{[2]}$ for all $t \in \gamma_c$. Let $\sE$ be 
the exceptional divisor of the relative Hilbert-Chow morphism over $\gamma_c$. 
Then, 
$NS(\sM_t) = \bZ \langle \sH_t, e_t 
\rangle$ for all $t \in \gamma_c \setminus \{0\}$ and 
$NS(S^{[2]}) \supset \bZ \langle H, e 
\rangle$ for $t = 0$. Here $e_t = \sE_t/2$, $H = \sH_0$ and $e = e_0$. 
In particular, $\bZ \langle \sH_t, e_t 
\rangle$ ($t \in \gamma_c$) forms a constant subsystem $\Lambda$ 
of the constant system 
$R^2f_*\bZ_{\sM}$. 
\par
\vskip 4pt
As $\sS_t$ ($t \in \gamma_c$) contains no $\bP^1$, the involution $\iota$ 
extends to a (not only birational but also) {\it biholomorphic} involution 
$I$ of 
$\sM$ over $\gamma_c$ such that $I_0 = \iota$. Under the natural action 
of $I$ on $R^2f_*\bZ_{\sM}$, the constant subsystem 
$\Lambda$ is stable. This is because the assertion (2) is true for $\sM_t$ 
($t \in \gamma_c \setminus \{0\}$) by \cite{HT01} Subsection 5.2. Hence, by specializing the result to $t = 0$, we have the assertion (2) also for our 
$S^{[2]}$. 
\par
\vskip 4pt
Let us show the assertion (3). Let $Z \in S^{[2]}$ be a general fixed point of 
$\iota$. Then $Z = P + Q$, where $P \not= Q$ are the tangent points 
of a double tangent line $\ell$ of a general hyperplane $H \cap S$ of $S$ 
(canonical curve of genus $3$). Let $(x_P, y_P)$ be a local coordinate 
of $P$ in $S$ 
such that $x_P$ is the local coordinate of $H\cap S$ at $P$ and $y_P$ 
is the local coordinate of the one parameter small deformation of the double tangent points at $P$. We take a local coordinate $(x_Q, y_Q)$ of $Q$ in $S$ similarly. Consider the local holomorphic $2$-form 
$$\tilde{\sigma} = dx_P \wedge dy_P + dx_Q \wedge dy_Q$$ 
on $S \times S$ at $(P, Q)$. This $\tilde{\sigma}$ is a non-zero 
multiple of $p_1^*\sigma_S + p_2^*\sigma_S$ at $(P, Q)$, possibly after rescalling of $(x_Q, y_Q)$. Here $p_i : S \times S \lra S$ is the projection to the $i$-th factor.
Thus $\tilde{\sigma}$ descends to the local holomorphic $2$-form, say 
$\sigma$, of $S^{[2]}$ at $P + Q$. 
By definition of $\iota$, we have $\iota^*\sigma = -\sigma$. On the other hand, the global holomorphic $2$-form $p_1^*\sigma_S + p_2^*\sigma_S$ 
descends to the global holomorphic $2$-form 
$\sigma_{S^{[2]}}$ on $S^{[2]}$. Hence 
$$\iota^{*}\sigma_{S^{[2]}} = -\sigma_{S^{[2]}}$$ 
at $P + Q$, whence, on $S^{[2]}$ as well. The same is clearly true for the involution $I_t$ on $\sM_t$ ($t \in \gamma_c)$. Thus 
$$I_t^{*}\sigma_{\sM_t} = -\sigma_{\sM_t}\,\, {\rm and}\,\, I_t^{*} \vert 
T(\sM_t) = -1\, .$$ 
Here $T(\sM_t)$ is the {\it transcendental lattice} of $\sM_t$, 
i.e., 
the minimal primitive submodule of $H^{2}(\sM_t, \bZ)$ such that 
$\sigma_{\sM_t} \in T(\sM_t)_{\bC}$. The second equality follows from the first one via the minimality of $T(\sM_t)$. On the other hand,  
$$NS(\sM_t)^{I_t^*} = \bZ \langle \sH_t -e_t \rangle$$  
for all $t \in \gamma_c \setminus \{0\}$, by $NS(\sM_t) = \bZ \langle \sH_t, e_t \rangle$ and by the explicit form of $I_t^{*} \vert NS(\sM_t)$ in (2). As 
$NS(\sM_t) \oplus T(\sM_t)$ 
is of finite index submodule of $H^2(\sM_t, \bZ)$, we have then that
$$H^2(\sM_t, \bZ)^{I_t^*} = \bZ \langle \sH_t -e_t \rangle$$ 
for all $t \in \gamma_c \setminus \{0\}$. 
As $I$ acts on the constant system $R^2f_*\bZ_{\sM}$, by specializing to 
$t=0$, 
we have 
$$H^2(S^{[2]}, \bZ)^{\iota^*} = \bZ \langle H -e \rangle$$ 
as well. This implies the assertion (3). 
\end{proof}

\begin{lemma}\label{lattice}
Let $N = \bZ \langle h_1, h_2 \rangle$ be a lattice whose bilinear form is
$$((h_i, h_j)) = \left( \begin{array}{cc} 4& 8\\ 
8 &  4 \end{array}\right)\, .$$
Then, there are a projective K3 surface $S$ and an isometry $\varphi : N \simeq NS(S)$ such that both $H_i := \varphi(h_i)$ ($i = 1$, $2$) are very ample on 
$S$. This $S$ contains no $\bP^1$.  
\end{lemma}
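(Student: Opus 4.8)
The plan is to build $S$ as a very general member of the family of $N$-polarised K3 surfaces, via the surjectivity of the period map, and then to read off very ampleness of the $H_i$ and the absence of $\bP^1$'s purely from the arithmetic of $N$. So first I would record the numerical features of $N$ that will do the work. The Gram matrix has determinant $16-64=-48<0$ and trace $8>0$, so $N$ is an even lattice of signature $(1,1)$; moreover $(xh_1+yh_2,xh_1+yh_2)=4(x^2+4xy+y^2)$ is divisible by $4$ for every $(x,y)\in\bZ^2$, and it vanishes only at $(x,y)=(0,0)$ since $t^2+4t+1$ has no rational root. Hence $N$ represents neither $-2$ nor $0$ by a non-zero vector, and $h_1$ and $h_2$ are both primitive.

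Next I would realise $N$ as a N\'eron-Severi lattice. By Nikulin's theory of embeddings of even lattices, the even lattice $N$ of signature $(1,1)$ embeds primitively into the K3 lattice $\Lambda:=U^{\oplus 3}\oplus E_8(-1)^{\oplus 2}$. In the period domain $\{[\omega]\in\bP(N^{\perp}\otimes\bC)\,:\,(\omega,\omega)=0,\ (\omega,\bar\omega)>0\}$, which is a non-empty complex manifold, I would choose $[\omega]$ lying outside the countably many hyperplane sections cut out by the vectors of $\Lambda$ not contained in $N$. By the surjectivity of the period map for K3 surfaces there is then a K3 surface $S$ with a Hodge isometry $H^2(S,\bZ)\simeq\Lambda$ carrying $H^{2,0}(S)$ to $\bC\omega$, and for such a general $\omega$ one has $NS(S)=\omega^{\perp}\cap\Lambda=N$ exactly. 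Since $N$ contains the class $h_1$ of positive square, $S$ is projective. Write $\varphi:N\simeq NS(S)$ for the isometry so produced and set $H_i:=\varphi(h_i)$.

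Because $NS(S)\simeq N$ contains no $(-2)$-class, $S$ carries no smooth rational curve --- any curve isomorphic to $\bP^1$ on a K3 surface has self-intersection $-2$ --- which already gives the last assertion of the Lemma, and moreover forces the ample cone of $S$ to be exactly one of the two components of its positive cone. As $(h_1,h_1)$, $(h_2,h_2)$ and $(h_1,h_2)$ are all positive, $h_1$ and $h_2$ lie in one and the same component of the positive cone of $N$; replacing $\varphi$ by $-\varphi$ if necessary, I may therefore assume that both $H_1$ and $H_2$ are ample. To upgrade ampleness to very ampleness I would invoke Saint-Donat's description of the complete linear system of a nef and big line bundle $L$ on a K3 surface: when $(L,L)\ge 4$ every configuration obstructing very ampleness of $|L|$ is witnessed by an effective divisor of self-intersection $0$ or $-2$, or by a divisibility $L=2B$. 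In our situation $(H_i,H_i)=4$; there is no effective divisor on $S$ of self-intersection $0$ or $-2$ because $N$ represents neither $0$ nor $-2$, and $H_i\ne 2B$ because $H_i$ is primitive; hence $H_1$ and $H_2$ are very ample.

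The single genuinely non-formal point is this last one: one must extract from Saint-Donat's (equivalently Mayer's) theorem the exact list of configurations that can obstruct very ampleness of a nef line bundle of self-intersection $\ge 4$ on a K3 surface, and verify that each is controlled by representability of $0$ or $-2$ in $NS(S)$, together with divisibility of the class. The remaining ingredients --- that an even hyperbolic rank-$2$ lattice is realised as $NS$ of a projective K3 surface, and that a very general period point of the associated period domain yields Picard lattice exactly $N$ --- are the standard lattice-theoretic and period-map package and go through without difficulty.
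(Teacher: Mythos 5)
Your argument is correct and follows essentially the same route as the paper: the paper simply cites Morrison's Corollary 2.9 [Mo84] for the realization of the even hyperbolic lattice $N$ as $NS(S)$ (which packages the Nikulin-embedding-plus-period-map argument you spell out), and then proceeds exactly as you do --- $N$ represents neither $0$ nor $-2$, hence no $\bP^1$ and the ample cone equals the positive cone, a sign change on $\varphi$ makes both $H_i$ ample, and Saint-Donat's Theorem 5.2 upgrades this to very ampleness. Your arithmetic verification via $(xh_1+yh_2)^2=4(x^2+4xy+y^2)$ is exactly the ``explicit form'' computation the paper leaves implicit.
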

\begin{proof} By the explicit form, $N$ is an even lattice of signature 
$(1,1)$. Thus, by \cite{Mo84} Corollary 2.9, there is a projective K3 surface 
$S$ 
such that $N \simeq NS(S)$, say by $\varphi$. Let $A(S) \subset NS(S)_{\bR}$ be the ample cone 
of $S$. Again by the explicit form, the lattice $N$ represents neither $0$ nor $-2$. Thus, $S$ has no $\bP^1$, and therefore, $A(S)$ coincides with the positive cone, i.e., one of the two 
connected components of
$$\{x \in NS(S)\, \vert\, (x^2) > 0\}\, .$$
Thus, by replacing $\varphi$ by $-\varphi$ if necessary, both $H_1$ and $H_2$ 
are ample on $S$. They are also very ample. In fact, as $NS(S)$ represents neither $-2$ nor $0$, it follows that $S$ contains neither $\bP^1$ nor elliptic pencil, and we can apply \cite{SD74} Theorem 5.2. 
\end{proof}
{\it From now on, let $S$ be a K3 surface in Lemma \ref{lattice}}. Corresponding to the complete linear systems $\vert H_1 \vert$, $\vert H_2 \vert$, we can embed $S$ into 
$\bP^3$ in two different ways: 
$$\Phi_{\vert H_1 \vert} : S \simeq S_1 \subset \bP^3\, ,\, 
\Phi_{\vert H_2 \vert} :S \simeq S_2 \subset \bP^3\, .$$  
The surfaces $S_1$ and $S_2$ are quartic surfaces in $\bP^3$ with no line. 
Thus, by applying Lemma \ref{line} to $S_1$ and $S_2$ respectively, 
we have two biholomorphic involutions $\iota_1$ and $\iota_2$ on 
$S^{[2]}$. We also note that $NS(S^{[2]}) = \bZ \langle H_1, H_2, e \rangle$. 

\begin{lemma}\label{matrix}
Under the basis $\langle H_1, e, H_2\rangle$ of $NS(S^{[2]})$, the involutions 
$\iota_1^* \vert NS(S^{[2]})$ and $\iota_2^* \vert NS(S^{[2]})$ are represented by the following matrices:
$$M_1 := \left( \begin{array}{ccc} 3 & 2 &8\\ 
-4 & -3 &  -8\\
0 & 0 & -1 \end{array}\right)\, ,\, 
M_2 := \left( \begin{array}{ccc} -1& 0 &0\\ 
-8 & -3 &  -4\\
8 & 2 & 3 \end{array}\right)\, .$$
\end{lemma}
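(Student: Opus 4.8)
The plan is to determine $\iota_1^{*}$ and $\iota_2^{*}$ on all of $H^{2}(S^{[2]},\bQ)$ by a short eigenspace argument based on Lemma~\ref{line}(3), and then to read off $M_1$ and $M_2$ by evaluating on the three basis vectors $H_1$, $e$, $H_2$.

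First I would record the lattice data. By the identification used in the proof of Lemma~\ref{autk3}, $H^{2}(S^{[2]},\bZ) = H^{2}(S,\bZ)\oplus\bZ[e]$ as lattices for the Beauville--Bogomolov--Fujiki form, this form restricting to the cup product on $H^{2}(S,\bZ)$ and satisfying $(e^{2}) = -2n+2 = -2$ with $e$ orthogonal to $H^{2}(S,\bZ)$. Passing to N\'eron--Severi groups, $NS(S^{[2]}) = \bZ\langle H_1, e, H_2\rangle$ and, by Lemma~\ref{lattice},
$$(H_i^{2}) = 4\, ,\quad (H_1, H_2) = 8\, ,\quad (H_i, e) = 0\, ,\quad (e^{2}) = -2\, .$$
Since $S_1$ and $S_2$ are smooth quartics in $\bP^{3}$ containing no line, Lemma~\ref{line} applies to each of them with hyperplane class $H_1$ resp. $H_2$; in particular $\iota_1$, $\iota_2$ are biholomorphic involutions of $S^{[2]}$ (part (1)) and Lemma~\ref{line}(3) is available.

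The key step is this. For $i=1,2$ the map $\iota_i^{*}$ is an involutive isometry of the Beauville--Bogomolov--Fujiki lattice $H^{2}(S^{[2]},\bZ)$, so $H^{2}(S^{[2]},\bQ)$ splits as the orthogonal direct sum of the $(+1)$- and $(-1)$-eigenspaces of $\iota_i^{*}$ (distinct-eigenvalue eigenspaces of an isometry are orthogonal, and an involution is diagonalizable over $\bQ$). Any $\iota_i^{*}$-fixed rational class has an integral multiple lying in $H^{2}(S^{[2]},\bZ)^{\iota_i^{*}} = \bZ\langle w_i\rangle$ with $w_i := H_i - e$ by Lemma~\ref{line}(3), so the $(+1)$-eigenspace is exactly the line $\bQ w_i$ and the $(-1)$-eigenspace is then $(\bQ w_i)^{\perp}$. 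As $(w_i^{2}) = (H_i^{2}) + (e^{2}) = 4-2 = 2 \neq 0$, this determines $\iota_i^{*}$ uniquely:
$$\iota_i^{*}(x) = (x, w_i)\, w_i - x \qquad (x \in H^{2}(S^{[2]},\bQ))\, ,$$
since the right-hand side fixes $w_i$ and acts by $-1$ on $w_i^{\perp}$.

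Finally I would just evaluate. For $i=1$ one has $(H_1, w_1) = 4$, $(e, w_1) = 2$, $(H_2, w_1) = 8$, whence $\iota_1^{*}H_1 = 3H_1 - 4e$, $\iota_1^{*}e = 2H_1 - 3e$, $\iota_1^{*}H_2 = 8H_1 - 8e - H_2$; in the basis $\langle H_1, e, H_2\rangle$ these columns are exactly $M_1$. For $i=2$ one has $(H_1, w_2) = 8$, $(e, w_2) = 2$, $(H_2, w_2) = 4$, giving $M_2$ in the same way. (The first two columns of $M_1$ and the last two of $M_2$ agree with Lemma~\ref{line}(2), a useful check.) There is no real obstacle here; the two points that need care are fixing the normalization of the Beauville--Bogomolov--Fujiki form on $NS(S^{[2]})$ and checking that Lemma~\ref{line}, in particular part (3), genuinely applies to $S_1$ and $S_2$ — which is exactly where the ``no line'' property of Lemma~\ref{lattice} is used. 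If one wished to avoid Lemma~\ref{line}(3), the first two columns still come from Lemma~\ref{line}(2) and the third can be found using only that $\iota_i^{*}$ is an isometry, but this leaves a two-fold ambiguity (e.g. $\iota_1^{*}H_2 \in \{\, 8H_1 - 8e - H_2,\ 4H_1 - 8e + H_2 \,\}$), which one then removes by Lemma~\ref{line}(3).
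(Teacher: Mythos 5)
Your proof is correct, and it takes a genuinely different (and slicker) route than the paper. The paper's proof takes the first two columns of $M_1$ directly from Lemma \ref{line}(2), writes $\iota_1^*H_2 = aH_1 + be + cH_2$, solves the three quadratic/linear isometry equations to narrow $(a,b,c)$ down to two possibilities, and then invokes Lemma \ref{line}(3) to kill the spurious solution $(4,-8,1)$ (because $\iota_1^*H_2 + H_2$ would otherwise be an invariant class outside $\bZ\langle H_1 - e\rangle$). You instead observe that Lemma \ref{line}(3) alone pins down the whole involution: since $\iota_i^*$ is a diagonalizable isometry of the nondegenerate Beauville--Bogomolov--Fujiki form whose full $(+1)$-eigenspace is the line spanned by $w_i = H_i - e$ with $(w_i^2) = 2$, it must be the reflection $x \mapsto (x,w_i)w_i - x$, and all three columns of $M_i$ (including the two that the paper imports from Lemma \ref{line}(2)) drop out by evaluating the pairings. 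Your computations $(H_1,w_1)=4$, $(e,w_1)=2$, $(H_2,w_1)=8$ and their analogues for $w_2$ are right and reproduce $M_1$, $M_2$ exactly. What your approach buys is a uniform closed formula and no case analysis; what the paper's buys is independence of the strength of part (3) for two of the three columns (it needs (3) only to break a two-fold ambiguity), which you correctly note as the fallback variant. Both arguments rest on the same external inputs, namely Lemma \ref{lattice} supplying the intersection numbers and the ``no line'' hypothesis that makes Lemma \ref{line} applicable to $S_1$ and $S_2$.
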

\begin{proof} By Lemma \ref{line}, we see that 
$$\iota_1^*(H_1) = 3H -4e\, ,\, \iota_1^*(e) = 2H_1 - 3e\, .$$
\par
\vskip 4pt 
Let us determine $\iota_1^*(H_2)$. As $NS(S^{[2]}) = \bZ \langle H_1, e, 
H_2 \rangle$, one can write 
$$\iota_1^* H_2 = aH_1 + be + cH_2\, .$$ 
Since $\iota_1^*$ is an isometry, we have 
$$((aH_1 + be + cH_2)^2) = (H_2^2) = 4\, ,$$
$$(3H_1 -4e, aH_1 + be + cH_2) = (H_1, H_2) = 8\, ,$$
$$(2H_1 - 3e, aH_1 + be + cH_2) = (e, H_2) = 0\, .$$
From these three equalities, we 
obtain that 
$$(a, b, c) = (8, -8, -1)\, {\rm or}\, (4, -8, 1)\, .$$
Let us exclude the second case $(4, -8, 1)$. If this happens, then 
$$\iota_1^*(H_2) + H_2 = 4H_1 -8e + 2H_2\, .$$
On the other hand, the class $\iota_1^*(H_2) + H_2$ is 
invariant under $\iota_1$, a contradiction to Lemma \ref{line} 
(3). Thus, 
$$\iota_1^* H_2 = 8H_1 - 8e - H_2\, .$$ 
Hence the matrix representation 
of $\iota_1^* \vert NS(S^{[2]})$ is $M_1$ as claimed. Changing the roles of $H_1$ and $H_2$, we see that the matrix representation 
of $\iota_2^* \vert NS(S^{[2]})$ is $M_2$ as well.
\end{proof}
\begin{lemma}\label{final}
Let $\ell \in [1, \infty) \cap \bZ$ and $g_{\ell} = (\iota_2 \iota_1)^{\ell} \in {\rm Aut}\, S^{[2]}$. Then, $g_{\ell}$ is not induced from any 
${\rm Aut}\, S'$. Here $S'$ is any K3 surface such that 
$(S')^{[2]} \simeq S^{[2]}$. Moreover, we have:
$$d_1(g_{\ell}) = d_3(g_{\ell}) = (17 + 12\sqrt{2})^{\ell}\, ,\, 
d_2(g_{\ell}) = 
(17 + 12\sqrt{2})^{2\ell}\, .$$
\end{lemma}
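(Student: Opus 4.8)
The plan is to reduce everything to the action of $g_\ell^{*}=(g_1^{*})^{\ell}$ on $H^{2}(S^{[2]},\bZ)$, decomposed into its N\'eron--Severi part $NS(S^{[2]})=\bZ\langle H_1,e,H_2\rangle$ and its transcendental part $T:=T(S^{[2]})$; these are mutually orthogonal for the Beauville--Bogomolov--Fujiki form and their sum is of finite index in $H^{2}(S^{[2]},\bZ)$, as in the proof of Lemma \ref{line}. On $NS(S^{[2]})$, Lemma \ref{matrix} says $\iota_j^{*}$ is represented by $M_j$, so $g_1^{*}\,|\,NS(S^{[2]})$ is represented by the product $M_1M_2$ (the opposite product $M_2M_1$ has the same characteristic polynomial, so the ordering is immaterial). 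On $T$, by Lemma \ref{line}(3) applied to the quartics $S_1$, $S_2$ (which contain no line, by Lemma \ref{lattice}) the $\iota_j^{*}$-fixed part of $H^{2}(S^{[2]},\bZ)$ lies in $NS(S^{[2]})$; since $\iota_j$ is an involution and $NS(S^{[2]})$ is nondegenerate, $\iota_j^{*}$ then acts as $-\mathrm{id}$ on $T$, whence $g_1^{*}\,|\,T=\iota_1^{*}\iota_2^{*}\,|\,T=\mathrm{id}$ and $g_\ell^{*}\,|\,T=\mathrm{id}$ for every $\ell\ge1$.

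First I would run the routine $3\times3$ computation. Multiplying the matrices gives $\mathrm{tr}(M_1M_2)=35$, $\det(M_1M_2)=1$, and sum of $2\times2$ principal minors equal to $35$, so the characteristic polynomial of $M_1M_2$ is $t^{3}-35t^{2}+35t-1=(t-1)(t^{2}-34t+1)$, with roots $1$, $\alpha:=17+12\sqrt2$, and $\alpha^{-1}=17-12\sqrt2\in(0,1)$. Since these three eigenvalues are distinct, $M_1M_2$ is semisimple, so $(M_1M_2)^{\ell}$ has the same eigenspaces; in particular the $g_\ell^{*}$-fixed subspace of $NS(S^{[2]})_{\bQ}$ is one-dimensional for every $\ell\ge1$. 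Solving $(M_1M_2-\mathrm{id})w=0$ produces the primitive generator $w=H_1+H_2-6e$, and using Lemma \ref{lattice} together with $(e^{2})=-2$ one gets $(w^{2})=(H_1^{2})+2(H_1,H_2)+(H_2^{2})+36(e^{2})=4+16+4-72=-48$.

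Next, the dynamical degrees. By the formula recalled in Section 2, $d_1(g_\ell)=\rho\bigl(g_\ell^{*}\,|\,H^{2}(S^{[2]},\bR)\bigr)$, and since $H^{2}(S^{[2]},\bR)=NS(S^{[2]})_{\bR}\oplus T_{\bR}$ as $g_\ell^{*}$-invariant subspaces, this spectral radius equals $\max\{1,\alpha^{\ell},\alpha^{-\ell}\}=(17+12\sqrt2)^{\ell}$. As $S^{[2]}$ is a hyperk\"ahler fourfold and $g_\ell\in\Aut S^{[2]}$, Theorem \ref{hk} with $n=2$ and $g=g_\ell$ then yields $d_3(g_\ell)=d_1(g_\ell)=(17+12\sqrt2)^{\ell}$ and $d_2(g_\ell)=d_1(g_\ell)^{2}=(17+12\sqrt2)^{2\ell}$, which is the asserted formula. (For a cross-check of the $k=2$ value one can instead invoke Remark \ref{simple}, which identifies $H^{4}(S^{[2]})=\mathrm{Sym}^{2}H^{2}(S^{[2]})$ for dimension reasons, and for $k=3$ use Poincar\'e duality together with $d_1(g_\ell)=d_1(g_\ell^{-1})$.)

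Finally, the assertion that $g_\ell$ is not induced. Suppose, for contradiction, that $g_\ell=\psi\circ h^{[2]}\circ\psi^{-1}$ for some K3 surface $S'$, some $h\in\Aut S'$ with induced automorphism $h^{[2]}$ of $(S')^{[2]}$, and some isomorphism $\psi\colon(S')^{[2]}\to S^{[2]}$. Then $\psi^{*}$ is a Hodge isometry of the Beauville--Bogomolov--Fujiki lattices, so it carries $NS\bigl((S')^{[2]}\bigr)$ onto $NS(S^{[2]})$; and since $h^{[2]}$ preserves the Hilbert--Chow exceptional divisor, $(h^{[2]})^{*}$ fixes the primitive class $e'\in NS\bigl((S')^{[2]}\bigr)$ with $((e')^{2})=-2$. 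Hence $v:=(\psi^{*})^{-1}(e')$ is a primitive element of $NS(S^{[2]})$ with $g_\ell^{*}v=v$ and $(v^{2})=-2$. But the $g_\ell^{*}$-fixed subspace of $NS(S^{[2]})_{\bQ}$ is $\bQ w$ by the second paragraph, so $v\in\bQ w\cap NS(S^{[2]})=\bZ w$, forcing $v=\pm w$ and $(v^{2})=-48\ne-2$ --- a contradiction. The only point needing genuine care is this last step, i.e.\ recognising that an induced automorphism must fix a \emph{primitive} $(-2)$-class \emph{lying in} $NS$ and then pinning that class down via the fixed-line computation; the rest is the mechanical $3\times3$ linear algebra above together with the appeal to Theorem \ref{hk}.
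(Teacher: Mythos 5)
Your proof is correct and follows essentially the same route as the paper: the explicit computation of $M_1M_2$, its characteristic polynomial $(x-1)(x^2-34x+1)$, the appeal to Theorem \ref{hk} for $d_2$ and $d_3$, and the contradiction between $((e')^2)=-2$ and the fixed class $H_1-6e+H_2$ of square $-48$. The only (harmless) variation is in passing from $NS(S^{[2]})$ to all of $H^2$: you compute $\iota_j^*\vert T = -\id$ directly from Lemma \ref{line}(3), whereas the paper instead invokes negative definiteness of the Beauville--Bogomolov--Fujiki form on the orthogonal complement of $NS(S^{[2]})_{\bR}$ in $H^{1,1}(S^{[2]},\bR)$; both yield $\rho(g_\ell^*\vert H^2)=\rho(g_\ell^*\vert NS)$.
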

\begin{proof} By Lemma \ref{matrix}, the matrix representation of 
$(\iota_2\iota_1)^* \vert NS(S^{[2]})$ is 
$$M_1M_2 = \left( \begin{array}{ccc} 45 & 10 &16\\ 
-36 & -7 &  -12\\
-8 & -2 & -3 \end{array}\right)$$
under the basis $\langle H_1, e, H_2 \rangle$ of $NS(S^{[2]})$. 
By straightforward calculation, we see that 
the characteristic polynomial of $M_1M_2$ is 
$$\Phi(x) = (x-1)(x - (17 + 12\sqrt{2}))(x - (17 - 12\sqrt{2}))\, .$$
This together with Theorem \ref{hk} implies the assertion of dynamical 
degrees. Here we used the fact that 
$$\rho(g_{\ell}^* \vert NS(S^{[2]})) 
= \rho(g_{\ell}^* \vert H^{1,1}(S^{[2]}, \bR))\, .$$ 
This equality follows 
from the fact that the Beauville-Bogolomov-Fujiki's form 
on the orthogonal complement of $NS(S)_{\bR}$ in $H^{1,1}(S^{[2]}, \bR)$ 
is negative definite.  
\par
\vskip 4pt
Let us show that $g_{\ell}$ is not induced from any automorphism of $S'$. 
Suppose that $g_{\ell}$ is induced from some automorphism of $S'$. 
Then, $g_{\ell}^*(e') = e'$. Here $E'$ is the exceptional divisor 
of the Hilbert-Chow morphism $(S')^{[2]} \lra (S')^{(2)}$ 
and $e' = E'/2$. On the other hand, by the explicit form of $M_1M_2$ above, 
the eigen vector in $NS(S^{[2]})$ corresponding to the eigenvalue 
$1$ is $k(H_1 - 6e + H_2)$ ($k \in \bZ \setminus \{0\}$).
Thus $e' = k(H_1 - 6e + H_2)$ for some $k \in \bZ$. 
However, this is impossible, as 
$$((e')^2) = -2\,\, {\rm but}\,\, ((H_1 - 6e + H_2)^2) 
= -48\, .$$ 
\end{proof} 

{\it Acknowledgement.} I would like to express my thanks to 
Professors Nessim Sibony and Tien-Cuong Dinh for inviting me to Paris (June 
2008) and for fruitful discussions. I would like to express my thanks to Professors Fr\'ed\'eric Campana, Kieran G. O'Grady and the refreee for important comments. 


\vskip .2cm
\vskip .2cm
\vskip .4cm \noindent

\vskip .2cm \noindent
Keiji Oguiso \\
Department of Economics, Keio University, 4-1-1
Hiyoshi Kohoku-ku, Yokohama, 223-8521, Japan\\ 
oguiso@hc.cc.keio.ac.jp


\begin{thebibliography}{999999}

\bibitem[Am07]{Am07} Amerik, E. : \textit{A computation of invariants of a rational self-map}, arXiv:0707.3947. 

\bibitem[Be82]{Be82} Beauville, A. : \textit{Some remarks on K\"ahler manifolds with $c_1 = 0$}, In: Classification of algebraic and analytic manifolds (Katata, 1982), Progr. Math. {\bf 39} Birkh\"auser Boston, Boston, MA (1983) 1--26.

\bibitem[Be83]{Be83} Beauville, A. : \textit{Vari\'et\'es k\"al\'eriennes dont la premi\`ere classe de Chern est nulle}, J. Differential Geometry {\bf 18} 
(1983) 
755--782.

\bibitem[BD85]{BD85} Beauville, A.,  R. Donagi, \textit{La vari\'et\'es des droites d'une hypersurface cubique de dimension 4}, C.R. Acad. Sci. Paris S\'er. I Math. {bf 301} (1985) 703--706. 

\bibitem[Bo78]{Bo78} Bogomolov, F. A. : \textit{Hamiltonian K\"ahlerian manifolds}, Dokl. Akad. Nauk SSSR {\bf 243} (1978),
1101--1104

\bibitem[Bo96]{Bo96} Bogomolov, F. A. : \textit{On the cohomology ring of a simple hyper-K\"ahler manifold (on the results of Verbitsky)}, 
Geom. Funct. Anal. {\bf 6} (1996) 612--618.

\bibitem[DS05]{DS05} Dinh, T.-C., Sibony, N. : \textit{Green currents for holomorphic automorphisms of compact K\"ahler manifolds}, J. Amer. Math. Soc. 
{\bf 18} (2005) 291--312. 

\bibitem[DS09]{DS09} Dinh, T.-C., Sibony, N. : \textit{Super-potentials for currents on compact K\"ahler manifolds and dynamics of automorphisms}, to appear 
in J. Algebraic Geometry. 

\bibitem[Fu83]{Fu83} Fujiki, A. : \textit{On primitively symplectic compact K\"ahler $V$-manifolds of dimension four}, Classification of algebraic and analytic manifolds (Katata, 1982), 71--250, Progr. Math., 39, 
Birkh\"auser Boston, Boston, MA, 1983. 

\bibitem[Gr03]{Gr03} Gromov, M. : \textit{On the entropy of holomorphic maps}, Enseign. Math. {\bf 49} (2003) 217--235.

\bibitem[CY03]{CY03} Gross, M., Huybrechts, D., and Joyce, D. : \textit{Calabi-Yau manifolds and related geometries}, Universitext. Springer-Verlag, Berlin (2003).

\bibitem[Gu05]{Gu05} Guedj, V. : \textit{Ergodic properties of rational 
mappings with large topological degree}, Ann. of Math. {\bf 161} (2005) 
1589--1607.

\bibitem[HT01]{HT01} Hassett, B., Tschinkel, Y. : \textit{Rational curves on holomorphic symplectic fourfolds}, Geom. Funct. Anal. {\bf 11} (2001) 1201--1228.

\bibitem[Mc02]{Mc02} McMullen, C. T. : \textit{Dynamics on $K3$ surfaces: Salem numbers and Siegel disks}, J. Reine Angew. Math. {\bf 545} (2002) 201--233. 

\bibitem[Mc07]{Mc07} McMullen, C. T. : \textit{Dynamics on blowups of the projective plane}, Publ. Math. Inst. Hautes \'Etudes Sci. {\bf 105} (2007) 49--89.

\bibitem[Mo84]{Mo84} Morrison, D. R. : \textit{On $K3$ surfaces with large Picard number}, Invent. Math. {\bf 75} (1984) 105--121.

\bibitem[OGr05]{OGr05} O'Grady, K. G. : \textit{Involutions and linear systems on holomorphic symplectic manifolds}, Geom. Funct. Anal. {\bf 15} (2005) 1223--1274.

\bibitem[Og03]{Og03} Oguiso, K. : \textit{Local families of K3 surfaces and applications}, J. Algebraic Geometry {\bf 12} (2003) 405--433. 

\bibitem[Og07]{Og07} Oguiso, K. : \textit{Salem polynomials and birational transformation groups for hyperK\"ahler manifolds}, Sugaku {\bf 59} (2007) 1--23.

\bibitem[SD74]{SD74} Saint-Donat, B. : \textit{Projective models of $K-3$ 
surfaces}, Amer. J. Math. {\bf 96} (1974) 602--639. 

\bibitem[Ve96]{Ve96} Verbitsky, M. : \textit{Cohomology of compact hyper-K\"ahler manifolds and its applications}, Geom. Funct. Anal. {\bf 6} (1996) 601--611. 

\bibitem[Yo01]{Yo01} Yoshioka, K. : \textit{Moduli spaces of stable sheaves on abelian surfaces}, Math. Ann. {\bf 321} (2001) 817--884.

\bibitem[Zh08]{Zh08} Zhang, D.-Q. : \textit{Dynamics of automorphisms on projective complex manifolds}, arXiv:0810.4675, to appear in J. Differential Geometry. 

\end{thebibliography}
\end{document}